\theoremstyle{plain}
\newtheorem*{theorem*}{Theorem}
\newtheorem*{mainlemma*}{Main Lemma}
\newtheorem{lemma}{Lemma}
\theoremstyle{remark}
\newtheorem*{remark*}{Remark}
\newcommand{\aSup}[1]{\ensuremath{\mathop{\underset{#1}{\sup}}}}
\newcommand{\mEQx}[2]{\ensuremath{\stackrel{\text{#1}}{#2}}}
\newcommand{\nEps}{\varepsilon}
\newcommand{\bDiv}{\ensuremath{/}}
\newcommand{\nP}{\ensuremath{P}}
\newcommand{\nM}{\ensuremath{\varrho}}
\newcommand{\aH}[1]{\ensuremath{{#1}^{\star}}}
\newcommand{\nPh}{\ensuremath{\aH{\nP}}}
\newcommand{\nMh}{\ensuremath{\aH{\nM}}}
\newcommand{\nE}{\ensuremath{{\mathbf E}}}
\newcommand{\nEh}{\ensuremath{\aH{\nE}}}
\newcommand{\aE}[1]{\ensuremath{\nE_{#1}}}
\newcommand{\aEh}[1]{\ensuremath{\nEh_{#1}}}
\newcommand{\nU}{u}
\newcommand{\aSeqb}[1]{\ensuremath{\left\{#1\right\}}}
\newcommand{\aSeqilu}[4]{\ensuremath{\aSeqb{#1}_{{#2}={#3}}^{#4}}}
\newcommand{\aSeq}[4]{\ensuremath{\aSeqb{#1}}}
\newcommand{\aLim}[1]{\ensuremath{\mathrel{\rightarrow}}}
\DeclareMathOperator{\nLimsup}{\overline{Lim}}
\DeclareMathOperator{\nLiminf}{\underline{Lim}}
\newcommand{\DOI}[1]{\href{https://doi.org/#1}{#1}}
\begin{document}     


\title{On completeness of Hausdorff hyperspaces}

\author{Ján Komara}

\address{
Department of Applied Informatics\\
Faculty of Mathematics, Physics and Informatics\\
Comenius University\\
Mlynská dolina F1\\ 
842 48 Bratislava\\ 
Slovakia
}

\email{komara@fmph.uniba.sk}

\thanks{Thanks to Aneta Barnes for proofreading this manuscript.}

\subjclass[2010]{Primary 54B20; Secondary 54E35, 28A80}

\keywords{Hausdorff hyperspaces, metric spaces, completeness, fractals}

\begin{abstract}
The Hausdorff hyperspace of a metric space 
consists of all its non-empty bounded closed sets
and it is equipped with the Pompeiu--Hausdorff set distance.
We present a simpler novel proof that 
the Hausdorff hyperspace of a complete space is complete as well.
The 
\hyperlink{cl:ml}{Main Lemma} 
is crucial in this demonstration
and though it uses an induction argument---%
the only one in our completeness proof---%
it is stated purely in terms of neighborhoods.
\end{abstract}

\maketitle


\section*{Introduction}

We focus on the following basic problem from the theory of metric spaces
\cite{bib:Hausdorff,bib:Hahn,bib:Cech,bib:Barnsley,bib:Henrikson,bib:Edgar}:
\begin{quote}
Let $\nP$ be a metric space with a distance function $\nM$ and 
$\nPh$ its Hausdorff hyperspace consisting 
of all non-empty bounded closed subsets of $\nP$
with metric $\nMh$ defined by
\begin{gather*}
\nMh(A,B) = 
\max   
\{ 
\aSup{x \in A} \nM(x,B), \aSup{y \in B} \nM(y,A) 
\}
.
\end{gather*}
The question is: Which metric and topological properties 
does the Hausdorff hyperspace $\nPh$ inherit from its base space $\nP$?
\end{quote}
The current interest in Hausdorff hyperspaces 
is driven by various applications in the theory of fractals
to provide a natural framework for the fractal geometry
\cite{bib:Barnsley,bib:Edgar}.
One of the most useful properties of a metric space is its completeness. 
According to 
\cite[Chap.~II]{bib:Barnsley},
complete Hausdorff hyperspace is ``the place where fractals live''.

The notion of set distance was introduced by Pompeiu
\cite[p.~17--18]{bib:Pompeiu} 
and later studied and partially modified by Hausdorff 
\cite[Chap.~VIII, §6]{bib:Hausdorff}---%
see 
\cite{bib:BirsanTiba} 
for a lengthy discussion on this topic.
The completeness result is due to Hahn 
\cite[18.10]{bib:Hahn}.
It has been reproved numerously---for the modern exposition, see, for example,
\cite[Thm.~7.1]{bib:Barnsley}, \cite[Thm.~3.3]{bib:Henrikson}, and \cite[Thm.~2.5.3]{bib:Edgar}. 
All these demonstrations are, as a rule, very technical and far from insightful.
Thus, more advanced improvements of this phenomenon should be considered.

We present a new, more elementary proof that Hausdorff hyperspaces inherit completeness. 
It is based on the ideas from the author's student project \cite{bib:Komara},
which were in turn inspired by the proof of a similar result
\cite[Thm.~17.6.5]{bib:Cech}
for hyperspaces consisting of non-empty compact point sets.


\section*{Prerequisites}

The reader is expected to be familiar with the basic theory of metric spaces.
We closely follow the notation and terminology of 
\cite{bib:Cech}
without further explanation.
We will use the letters $x,y,z$ for points of metric spaces,
letters $A,B$ for point sets,
letters $i,j,k,m,n$ for natural numbers $1,2,3,\ldots$,
and letters $b,\nEps$ for real numbers.

Following 
\cite[17.6]{bib:Cech}, 
we denote by $\nU(A,B)$ and $\nU(B,A)$:
\[
\nU(A,B) = \aSup{x \in A} \nM(x,B)
,
\qquad
\nU(B,A) = \aSup{y \in B} \nM(y,A)
,
\]
the distances from the set $A$ to the set $B$ and 
from the set $B$ to the set $A$, respectively.
With this notation, we can simply note down that
\[
\nMh(A,B) = \max \{ \nU(A,B), \nU(B,A) \}
.
\]
The number $\nMh(A,B)$ represents 
the reciprocal distance between the sets $A$ and $B$.

Our completeness result is based on the following characterization 
of limits in the hyperspace $\nPh$:
{\it
If the set $A$ is the limit 
of the sequence \aSeqilu{A_n}{n}{1}{\infty} in \nPh, 
then
}
\begin{align}
A = \bigcap_{n=1}^{\infty} \overline{\bigcup_{i=n}^{\infty} A_i}
.
\label{eq:limit}
\end{align}
The overbar denotes the topological closure operator on $\nP$.
This can be proved by arguments similar to those in
\cite[17.6.1--17.6.3]{bib:Cech}.


\section*{Completeness}

The next theorem contains the main result of this note.


\begin{theorem*}

If $\nP$ is a complete metric space, 
then so is its Hausdorff hyperspace $\nPh$.

\end{theorem*}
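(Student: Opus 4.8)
The plan is to take an arbitrary $\nMh$-Cauchy sequence $\aSeqilu{A_n}{n}{1}{\infty}$ in $\nPh$ and exhibit its limit. The characterization \eqref{eq:limit} dictates the only possible candidate, so I set $A = \bigcap_{n=1}^{\infty}\overline{\bigcup_{i=n}^{\infty}A_i}$ and aim to prove both that $A\in\nPh$ and that $\nMh(A_n,A)\to 0$. Since $\nMh(A_n,A)=\max\{\nU(A_n,A),\nU(A,A_n)\}$, the convergence claim splits into the two one-sided estimates $\nU(A,A_n)\to 0$ and $\nU(A_n,A)\to 0$, which I would treat separately.

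Everything except the second one-sided estimate is routine. The set $A$ is closed as an intersection of closed sets, and it is bounded because a $\nMh$-Cauchy sequence is $\nMh$-bounded: if $\nMh(A_i,A_N)\le 1$ for all $i\ge N$, then $\bigcup_{i\ge N}A_i$ lies in the unit neighborhood of the bounded set $A_N$, so its closure---which contains $A$---is bounded. For $\nU(A,A_n)\to 0$: a point of $A$ lies in $\overline{\bigcup_{i\ge n}A_i}$ for every $n$, hence given $\nEps>0$ it is within $\nEps/2$ of some point of some $A_m$ with $m\ge n$; taking $n$ so large that $\nMh(A_m,A_n)<\nEps/2$ whenever $m\ge n$, the triangle inequality places that point within $\nEps$ of $A_n$. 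The non-emptiness of $A$ I would postpone, since it drops out of the argument for the remaining estimate.

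The real work is in showing $A\ne\emptyset$ and $\nU(A_n,A)\to 0$, that is, that every point of every sufficiently late $A_n$ lies near $A$. The classical device is point-chasing: given $\nEps>0$, fix $N$ with $\nMh(A_i,A_j)<\nEps/2$ for $i,j\ge N$; for $n\ge N$ and $x\in A_n$, pick indices $n=m_0<m_1<m_2<\cdots$ with $\nMh(A_{m_k},A_{m_{k+1}})<\nEps/2^{k+1}$ and then, starting from $x_0=x$, inductively choose $x_{k+1}\in A_{m_{k+1}}$ with $\nM(x_k,x_{k+1})<\nEps/2^{k+1}$. The sequence $\aSeqb{x_k}$ is Cauchy in $\nP$, so by completeness of $\nP$ it converges to a point $y$; one checks that $y\in\overline{\bigcup_{i\ge p}A_i}$ for every $p$ (a tail of the $x_k$ lies in that set), whence $y\in A$, while $\nM(x,y)\le\sum_{k\ge 0}\nEps/2^{k+1}=\nEps$. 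This at once exhibits a point of $A$ and gives $\nU(A_n,A)\le\nEps$ for all $n\ge N$.

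The step I expect to be the main obstacle---and presumably the content of the Main Lemma---is to carry out this single induction cleanly, stated purely in terms of $\nEps$-neighborhoods: something like ``if $A_i$ is contained in the $\nEps_i$-neighborhood of $A_{i+1}$ and $\sum_i\nEps_i<\infty$, then $\bigcup_i A_i$ is contained in the $(\sum_i\nEps_i)$-neighborhood of $\bigcap_k\overline{\bigcup_{i\ge k}A_i}$, and the latter set is non-empty'', so that completeness of $\nP$ is invoked exactly once and the bookkeeping with the geometric tails disappears into the statement. Granting such a lemma, the remaining threads---verifying $y\in A$ and passing from the chosen indices $m_k$ back to the original sequence---are routine.
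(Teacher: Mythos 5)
Your proposal is correct and follows essentially the same route as the paper: the same candidate limit $A=\bigcap_{n}\overline{\bigcup_{i\ge n}A_i}$, the same split into $A\in\nPh$ plus the two one-sided estimates $\nU(A,A_n)\to 0$ and $\nU(A_n,A)\to 0$, and the same single point-chasing induction along a rapidly Cauchy subsequence, with completeness of $\nP$ invoked once to land the limit point in $A$. The only cosmetic difference is that your version starts the geometric decay at the very first link (giving $\nM(x,y)\le\nEps$ outright), whereas the paper's \hyperlink{cl:ml}{Main Lemma} takes the hypothesis $\nM(x,A_i)<\nEps$ at face value for the first step and recovers $\nM(x,A)\le\nEps$ by letting its ratio parameter $b\to\infty$.
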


\begin{proof}
Let \aSeq{A_n}{n}{1}{\infty} be a Cauchy sequence in \nPh.
We claim that the point set $A$ defined by \eqref{eq:limit} 
is the limit of the sequence \aSeq{A_n}{n}{1}{\infty} in \nPh,
in symbols $A_n \aLim{n} A$ in $\nPh$,
or equivalently that 
$A \in \nPh$, $\nU(A,A_n) \aLim{n} 0$, and $\nU(A_n,A) \aLim{n} 0$.
The proof is thus naturally divided into three steps,
stated below as Lemmas \ref{cl:ph}, \ref{cl:uaan}, and \ref{cl:uana}.

The following auxiliary lemma is significant in their justification.
It asserts that  if there is an open ball in $\nP$ such that
all but a finite number of the point sets of the Cauchy sequence \aSeq{A_n}{n}{1}{\infty}
have an element within the ball,
then the distance of its center to the set $A$ 
is less than or equal  to its radius.
\renewcommand{\qedsymbol}{}
\end{proof}


\begin{mainlemma*}
\hypertarget{cl:ml}{}

Suppose that for a point $x$ of $\nP$
there is an $\nEps > 0$ and a number $m$ such that 
$\nM(x,A_i) < \nEps$ for every index $i > m$.
Then $\nM(x,A) \leq \nEps$.

\end{mainlemma*}

\begin{proof}

Fix $b > 1$.
Since \aSeq{A_n}{n}{1}{\infty} is a Cauchy sequence, 
a simple induction argument shows that 
there is a subsequence \aSeqilu{A_{n_i}}{i}{1}{\infty} of \aSeq{A_n}{n}{1}{\infty}
starting with $n_1 > m$ such that
\begin{align*}
(\forall j > n_i) (\forall z \in A_{n_i}) (\nM(z,A_j) < \nEps \bDiv b^{i})
\qquad
\text{($i = 1,2,3,\ldots$)}
.
\end{align*}
Then another induction argument yields 
a sequence \aSeqilu{z_i}{i}{1}{\infty} of points of $\nP$
with the following properties
\begin{align}
z_i \in A_{n_i}
,
\qquad
\nM(x,z_1) < \nEps
,
\qquad
\nM(z_i,z_{i+1}) < \nEps \bDiv {b^{i}}
\qquad
\text{($i = 1,2,3,\ldots$)}
.
\label{eq:ml:zni:d}
\end{align}

The sequence \aSeq{z_i}{i}{1}{\infty} is Cauchy in $\nP$, 
because
\begin{align*}
\sum_{i=1}^{\infty} \nM(z_i,z_{i+1})
 \mEQx{\eqref{eq:ml:zni:d}}{\leq}
\sum_{i=1}^{\infty} \nEps \bDiv {b^{i}} =
\nEps \bDiv (b-1) < 
+\infty
.
\end{align*}
As the space $\nP$ is complete,
there is a point $y$ of $\nP$ such that $z_i \aLim{i} y$ in $\nP$.
Since all but a finite number of 
members of the sequence \aSeq{z_i}{i}{1}{\infty}
lie in the point set $\bigcup_{i = n}^{\infty} A_i$,
the limit $y$ of \aSeq{z_i}{i}{1}{\infty} 
belongs to its closure
$\overline{\bigcup_{i = n}^{\infty} A_i}$.
As this holds for any number $n$,
we then have $y \in A$ by definition.

Since $\nM(z_i,y) \aLim{i} 0$, 
there is an index $k$ such that $\nM(z_k,y) < \nEps \bDiv (b-1)$.
Then
\begin{align*}
\nM(x,y) & \leq
\nM(x,z_1) + \sum_{i=1}^{k-1} \nM(z_i,z_{i+1}) + \nM(z_k,y) 
 \mEQx{\eqref{eq:ml:zni:d}}{<}
\nEps + \sum_{i=1}^{k-1} \nEps \bDiv {b^{i}} + \nEps \bDiv (b-1)
\\ & <
\nEps + \nEps \bDiv (b-1) + \nEps \bDiv (b-1) = 
\nEps(b+1) \bDiv (b-1)
.
\end{align*}
Therefore,
\(
\nM(x,A) < \nEps(b+1) \bDiv (b-1)
\)
for every $b > 1$. 
Hence
\(
\nM(x,A) \leq \nEps
\).
\end{proof}


\begin{remark*}

The bound cannot be made tighter.
Indeed, consider the standard one-dimensional Euclidean space \aE{1} 
with metric $\nM(x,y) = |x-y|$.
As $1/n \aLim{n} 0$ in~\aE{1}, 
we have $\{1/n\} \aLim{n} \{0\}$ in \aEh{1},
and thus \aSeqilu{\{1/n\}}{n}{1}{\infty} is a Cauchy sequence in \aEh{1}. 
Furthermore, we also have
\begin{align*}
\bigcap_{n=1}^{\infty} \overline{\bigcup_{i=n}^{\infty} \{1/i\}} =
\bigcap_{n=1}^{\infty} \overline{\{1/i \mid i \geq n\}} =
\bigcap_{n=1}^{\infty} (\{1/i \mid i \geq n\} \cup \{0\}) =
\{0\}
.
\end{align*}
Clearly, 
$\nM(1,\{1/n\}) = 1-1/n < 1$ for $n = 1,2,3,\ldots$,
and $\nM(1,\{0\}) = 1$.

\end{remark*}


\begin{lemma}
\label{cl:ph}

$A \in \nPh$.

\end{lemma}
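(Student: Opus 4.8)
The plan is to verify in turn the three properties that a point set must satisfy to belong to $\nPh$: that $A$ is closed, bounded, and non-empty. The first two are routine; the third is the only place where the \hyperlink{cl:ml}{Main Lemma} is needed, and it is the part I expect to require the most care.

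Closedness is immediate from \eqref{eq:limit}: $A$ is an intersection of the sets $\overline{\bigcup_{i=n}^{\infty} A_i}$, each of which is closed in $\nP$ as a topological closure, so $A$ is closed. For boundedness I would invoke the Cauchy condition to fix a number $N$ with $\nMh(A_i,A_N) < 1$ for all $i > N$; then every such $A_i$ lies in the $1$-neighborhood of $A_N$, and since $A_N$ is bounded, so is that neighborhood, hence so is $\bigcup_{i=N+1}^{\infty} A_i$ and (since passing to the closure does not increase the diameter) so is $\overline{\bigcup_{i=N+1}^{\infty} A_i}$. As $A \subseteq \overline{\bigcup_{i=N+1}^{\infty} A_i}$ by \eqref{eq:limit}, the set $A$ is bounded.

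For non-emptiness I would use the Cauchy condition once more, choosing a number $m$ such that $\nMh(A_i,A_j) < 1$ whenever $i > m$ and $j > m$, and then picking any point $x$ of $A_{m+1}$ (which is possible because $A_{m+1}$ is non-empty). For every index $i > m$ we then have $\nM(x,A_i) \leq \nU(A_{m+1},A_i) \leq \nMh(A_{m+1},A_i) < 1$, so the hypothesis of the \hyperlink{cl:ml}{Main Lemma} is met with $\nEps = 1$ and we obtain $\nM(x,A) \leq 1$. In particular $\nM(x,A)$ is finite, whereas the distance from a point to the empty set is $+\infty$; hence $A$ is non-empty. Combining the three parts yields $A \in \nPh$. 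The only subtlety is making sure that the finitely many early terms $A_1,\dots,A_m$ are harmless — which is exactly why \eqref{eq:limit} is written with the inner union starting at an arbitrary index $n$.
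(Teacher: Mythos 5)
Your proposal is correct and follows essentially the same route as the paper: closedness from the intersection of closures, boundedness by trapping a tail of the sequence in the unit neighborhood of a fixed $A_N$, and non-emptiness by applying the \hyperlink{cl:ml}{Main Lemma} with $\nEps=1$ to a point of one of the $A_i$'s and concluding from the finiteness of $\nM(x,A)$. The only (harmless) cosmetic differences are that you bound the tail union $\bigcup_{i=N+1}^{\infty}A_i$ directly rather than the full union $\bigcup_{i=1}^{\infty}A_i$, and you take your witness point from $A_{m+1}$ rather than $A_m$.
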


\begin{proof}
First we show that the point set $A$ is non-empty.
Since \aSeq{A_n}{n}{1}{\infty} is a Cauchy sequence, 
there is an index $m$ such that 
\begin{align*}
(\forall i > m) (\forall x \in A_m) (\nM(x,A_i) < 1)
.
\end{align*}
Fix $x \in A_m$
(an element such as this exists, as the set $A_m$ is non-void).
The point $x$ satisfies the assumptions of 
\hyperlink{cl:ml}{Main Lemma}
with $\nEps$ replaced by 1.
Hence $\nM(x,A) \leq 1$.
Since the distance $\nM(x,A)$ is finite,
the set $A$ has at least one element.

We now prove that the set $A$ is bounded.
As we have
\(
A \subseteq \overline{\bigcup_{i = 1}^{\infty} A_i}
\),
it suffices to show that the union $\bigcup_{i = 1}^{\infty} A_i$ is a bounded set.
Since \aSeq{A_n}{n}{1}{\infty} is a Cauchy sequence, 
there is an index $m$ such that 
\begin{align*}
(\forall i > m) (\forall x \in A_i) (\nM(x,A_m) < 1)
.
\end{align*}
Therefore,
\(
A_i \subseteq \{ x \in \nP \mid \nM(x,A_m) < 1 \}
\)
for every index $i > m$. 
Then
\begin{align*}
\bigcup_{i = 1}^{\infty} A_i =
\bigcup_{i = 1}^{m} A_i \cup \bigcup_{i = m+1}^{\infty} A_i \subseteq 
\bigcup_{i = 1}^{m} A_i \cup \{ x \in \nP \mid \nM(x,A_m) < 1 \}
.
\end{align*}
Consequently,
the point set $\bigcup_{i = 1}^{\infty} A_i$, 
being a subset of a finite union of bounded sets, 
is also bounded.

By definition, each set 
$\overline{\bigcup_{i = n}^{\infty} A_i}$ is closed.
Hence their intersection, 
the set $A$, is closed as well.
\end{proof}


\begin{lemma}
\label{cl:uaan}

$\nU(A,A_n) \aLim{n} 0$.

\end{lemma}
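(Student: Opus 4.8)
The goal is to show that $\nU(A,A_n) = \aSup{x \in A} \nM(x,A_n) \to 0$, i.e. given $\nEps > 0$, all but finitely many $A_n$ are such that every point of $A$ lies within $\nEps$ of $A_n$. The plan is to fix $\nEps > 0$ and, using that \aSeq{A_n}{n}{1}{\infty} is Cauchy, choose an index $m$ such that $\nU(A_i, A_j) < \nEps/2$ (say) for all $i, j > m$ — more precisely so that $\nM(z, A_j) < \nEps/2$ for every $z \in A_i$ and every $j \geq i > m$. I would then show that this same $m$ witnesses $\nU(A, A_n) \leq \nEps$ for all $n > m$.

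The heart of the argument is to estimate $\nM(x, A_n)$ for an arbitrary $x \in A$ and $n > m$. By the defining formula \eqref{eq:limit}, $x$ lies in $\overline{\bigcup_{i=n}^{\infty} A_i}$, so there is an index $i \geq n$ and a point $z \in A_i$ with $\nM(x,z)$ as small as we like — smaller than $\nEps/2$, say. Since $i \geq n > m$, the Cauchy choice of $m$ gives $\nM(z, A_n) < \nEps/2$ (here using $n \geq $ the appropriate threshold; one has to be slightly careful whether it is $A_n$ approximated by $A_i$ or vice versa, but the symmetric Cauchy condition covers both directions, so I would simply demand $m$ large enough that $\nMh(A_i,A_j) < \nEps/2$ for all $i,j > m$). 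Then the triangle inequality for point-to-set distance, $\nM(x, A_n) \leq \nM(x,z) + \nM(z,A_n) < \nEps/2 + \nEps/2 = \nEps$, does it. Since $x \in A$ was arbitrary, $\nU(A,A_n) \leq \nEps$ for every $n > m$, which is exactly what convergence to $0$ requires.

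The one point needing care — and the only real obstacle — is that membership of $x$ in the closure $\overline{\bigcup_{i=n}^{\infty} A_i}$ only gives an approximating point in $A_i$ for \emph{some} index $i \geq n$, not for $i = n$ itself; this is why the Cauchy condition must be invoked in the two-sided (symmetric) form $\nMh$, so that $\nM(z, A_n)$ is controlled regardless of whether $i$ equals or exceeds $n$. Everything else is a routine triangle-inequality chase, and notably this lemma, unlike the \hyperlink{cl:ml}{Main Lemma}, requires no induction — the Main Lemma already did the heavy lifting of extracting convergent selections, and here we only need the elementary closure-approximation property together with the Cauchy estimate.
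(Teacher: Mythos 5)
Your proposal is correct and follows essentially the same route as the paper's proof: fix $\nEps>0$, pick $m$ with $\nU(A_i,A_j)<\nEps/2$ for all $i,j>m$, use membership of $x\in A$ in $\overline{\bigcup_{i=n}^{\infty}A_i}$ to find an approximant $y\in A_i$ with $i\geq n$, and conclude by the triangle inequality $\nM(x,A_n)\leq\nM(x,y)+\nU(A_i,A_n)<\nEps$. The only cosmetic difference is that you invoke the symmetric distance $\nMh(A_i,A_j)$ where the paper's one-sided bound $\nU(A_i,A_n)$ already suffices (since the approximating point lies in $A_i$); both work because $i\geq n>m$ puts both indices past the Cauchy threshold.
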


\begin{proof}
It is sufficient to show that
\begin{align*}
(\forall \nEps>0) (\exists m) (\forall n > m) (\forall x \in A) (\nM(x,A_n) < \nEps)
.
\end{align*}
Let $\nEps>0$.
Since \aSeq{A_n}{n}{1}{\infty} is a Cauchy sequence, 
there is a number $m$ such that
\begin{align}
(\forall i,j > m) (
\nU(A_i,A_j) 
< \nEps \bDiv 2
)
.
\label{eq:uaan:m}
\end{align}
Choose an index $n > m$ and a point $x$ of $A$.
By definition, we have 
$x \in \overline{\bigcup_{i = n}^{\infty} A_i}$.
Therefore,
there is an index $i \geq n$ and a point $y$ of $A_i$ 
such that $\nM(x,y) < \nEps \bDiv 2$.
From this we get
\begin{align*}
\nM(x,A_n) \leq 
\nM(x,y)+\nM(y,A_n) \leq 
\nM(x,y)+\nU(A_i,A_n) \mEQx{\eqref{eq:uaan:m}}{<} 
\nEps \bDiv 2 + \nEps \bDiv 2 = 
\nEps
.
\tag*{\qedhere}
\end{align*}
\end{proof}


\begin{lemma}
\label{cl:uana}

$\nU(A_n,A) \aLim{n} 0$.

\end{lemma}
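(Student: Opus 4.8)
The plan is to show that $\nU(A_n,A) \aLim{n} 0$, i.e.\ for every $\nEps > 0$ there is an index $m$ such that $\nU(A_n,A) < \nEps$ for all $n > m$. Unwinding the definition of $\nU$, I need to find $m$ so that for every $n > m$ and every $x \in A_n$ one has $\nM(x,A) < \nEps$ (a non-strict inequality against a slightly smaller target would do equally well). The natural idea is to reduce this to the \hyperlink{cl:ml}{Main Lemma}: if I can arrange that such an $x$ lies within some fixed $\nEps' < \nEps$ of \emph{all but finitely many} of the sets $A_i$, the Main Lemma immediately gives $\nM(x,A) \le \nEps' < \nEps$.

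First I would invoke the Cauchy condition to pick $m$ with $\nU(A_i,A_j) < \nEps\bDiv 2$ for all $i,j > m$. Now fix any $n > m$ and any $x \in A_n$. For every index $j > m$ we have $\nM(x,A_j) \le \nU(A_n,A_j) < \nEps\bDiv 2$, since $x \in A_n$ and $n > m$. Thus $x$ satisfies the hypothesis of the \hyperlink{cl:ml}{Main Lemma} with $\nEps\bDiv 2$ in place of $\nEps$ and with $m$ as the threshold index: $\nM(x,A_i) < \nEps\bDiv 2$ for every $i > m$. Applying the Main Lemma yields $\nM(x,A) \le \nEps\bDiv 2 < \nEps$. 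Since $x \in A_n$ was arbitrary, $\nU(A_n,A) = \aSup{x \in A_n} \nM(x,A) \le \nEps\bDiv 2 < \nEps$, and since $n > m$ was arbitrary, this completes the argument.

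I do not expect a genuine obstacle here — the real work has already been done inside the Main Lemma, and this lemma is essentially its intended payoff. The only point requiring a little care is the direction of the set-distance inequality: I must use $\nM(x,A_j) \le \nU(A_n,A_j)$ for $x \in A_n$ (distance from a point of $A_n$ to $A_j$ is bounded by the sup over $A_n$ of such distances), and correspondingly invoke the Cauchy condition in the form controlling $\nU(A_i,A_j)$ with the first argument being the index $n$ of the set containing $x$. Taking the supremum over $x \in A_n$ at the end turns the pointwise bound into the bound on $\nU(A_n,A)$, which together with Lemmas \ref{cl:ph} and \ref{cl:uaan} finishes the proof of the Theorem.
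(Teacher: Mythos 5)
Your proof is correct and follows essentially the same route as the paper: invoke the Cauchy condition to bound $\nU(A_i,A_j)$ for large indices, then apply the \hyperlink{cl:ml}{Main Lemma} to each $x \in A_n$. The only (immaterial) difference is your use of $\nEps\bDiv 2$ to obtain a strict final inequality, where the paper works with $\nEps$ directly and settles for $\nM(x,A) \leq \nEps$.
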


\begin{proof}
It suffices to prove that
\begin{align*}
(\forall \nEps>0) (\exists m) (\forall n > m) (\forall x \in A_n) (\nM(x,A) \leq \nEps)
.
\end{align*}
Let $\nEps>0$.
Since \aSeq{A_n}{n}{1}{\infty} is a Cauchy sequence, 
there is a number $m$ such that
\begin{align}
(\forall i,j > m) (
\nU(A_i,A_j) 
< \nEps)
.
\label{eq:uana:m}
\end{align}
Pick an index $n > m$ and a point $x$ of $A_n$.
From \eqref{eq:uana:m} it follows that
\[
(\forall j > n) (
\nM(x,A_j) 
< \nEps)
.
\]
The point $x$ satisfies the assumptions of 
\hyperlink{cl:ml}{Main Lemma}.
Hence $\nM(x,A) \leq \nEps$.
\end{proof}


\section*{Conclusion}

There are three intensionally different, albeit equivalent, 
characterizations of limits in Hausdorff hyperspaces.
The first one is described by \eqref{eq:limit},
the other two are based on the notions of 
the lower and upper limit of sequences of point sets.
These are defined as follows:
\begin{itemize}
\item
The point $x$ belongs to the lower limit of \aSeq{A_n}{n}{1}{\infty},
in symbols $x \in \nLiminf A_n$,
if there is a sequence $\aSeq{y_n}{n}{1}{\infty}$ such that 
$y_n \in A_n$ and $y_n \aLim{n} x$.
\item
The point $x$ belongs to the upper limit of \aSeq{A_n}{n}{1}{\infty},
in symbols $x \in \nLimsup A_n$,
if there is a subsequence \aSeq{A_{n_i}}{i}{1}{\infty} and 
sequence $\aSeq{y_i}{i}{1}{\infty}$ such that 
$y_i \in A_{n_i}$ and $y_i \aLim{i} x$.
\end{itemize}
If the sequence of point sets \aSeq{A_n}{n}{1}{\infty} has the limit $A$ in the Hausdorff hyperspace, 
then proceeding analogously as in
\cite[17.6.1--17.6.3]{bib:Cech}
we obtain%
\footnote{
The last equality holds for any (not necessarily convergent) sequence of point sets.
}
\begin{align*}
A =
\nLiminf A_n =
\nLimsup A_n = 
\bigcap_{n=1}^{\infty} \overline{\bigcup_{i=n}^{\infty} A_i} 
.
\end{align*}
It is a matter of preference, which characterization is used.
However, completeness proofs based on lower or upper limits
tend to be quite technical, as they mainly rely 
on several constructions of point sequences in the base space.

For this reason, we prefer the characterization based on \eqref{eq:limit}.
It has been used, for instance, in the proof of Theorem~17.6.5 in
\cite{bib:Cech}---the inspiration for this note---%
for hyperspaces consisting of non-empty compact point sets.
Their Theorem~15.7.2 has a similar role as our
\hyperlink{cl:ml}{Main Lemma} 
for us;
however, the proof of this auxiliary claim and its application is 
still quite complex and not very insightful.
The appeal of 
\hyperlink{cl:ml}{Main Lemma}
is that it is phrased solely in terms of neighborhoods
with absolutely no reference to the used construction by induction of a point sequence---%
the only one in our completeness proof.
Once established, the remaining lemmas are followed easily.



\end{document}